\theoremstyle{definition}
\newtheorem{thm}{Theorem} 
\newtheorem{rem}[thm]{Remark}
\newtheorem{lem}[thm]{Lemma}
\newtheorem{prop}[thm]{Proposition}
\newcommand{\Z}{\mathbb{Z}}
\newcommand{\N}{\mathbb{N}}
\newcommand{\R}{\mathbb{R}}
\newcommand{\ind}{\scalebox{1.2}{\raisebox{-0.2mm}{$\mathds{1}$}}}
\renewcommand{\l}{\langle}
\renewcommand{\r}{\rangle}
\renewcommand{\a}{\alpha}
\renewcommand{\O}{\Omega}
\newcommand{\wt}{\widetilde}
\newcommand{\widebar}[1]{\mbox{\kern1.5pt\hbox{\vbox{\hrule height 0.6pt \kern0.35ex
        \hbox{\kern-0.15em \ensuremath{#1 }\kern0.0em}}}}\kern-0.1pt}
\newcommand\dint{{\,\rm d}}
\newcommand{\vol}{{\rm vol}}
\newcommand{\supp}{{\rm supp}}
\newcommand{\diag}{{\rm diag}}
\newcommand{\mr}[1]{\accentset{\circ}{#1}}
\newcommand{\Pn}{\mathcal{P}_n}
\newcommand{\F}{\mathcal{F}}
\newcommand{\E}{\mathbb{E}}
\newcommand{\D}{\Delta}
\newcommand{\U}{\mathcal{U}}
\title[Monte Carlo for smooth functions]
{A Monte Carlo method for integration of multivariate smooth functions}   
\author{Mario Ullrich}   
\address{Institut f\"ur Analysis, Johannes Kepler Universit\"at Linz, Austria}
\email{mario.ullrich@jku.at}
\date{\today}
\keywords{Monte Carlo method, Sobolev spaces, Frolov cubature}
\subjclass{65D30, 65C05, 68Q25, 46E35, 42B10}
\begin{document} 
 
\begin{abstract} 
We study a Monte Carlo algorithm that is based on a specific (randomly shifted and dilated) 
lattice point set. 
The main result of this paper is that the mean squared error for a given compactly supported, 
square-integrable 
function is bounded by $n^{-1/2}$ times the $L_2$-norm of the Fourier transform outside a 
region around the origin, where $n$ is the expected number of function evaluations. 
As corollaries we obtain the optimal order of convergence for functions from the Sobolev 
spaces $H^s_p$ with isotropic, anisotropic or mixed smoothness with given 
compact support for all values of the parameters. 
If the region of integration is the unit cube, we obtain the same optimal orders 
for functions without boundary conditions.
This proves, in particular, that the optimal order of convergence in the latter 
case is $n^{-s-1/2}$ for $p\ge2$, which is, in contrast to the case of 
deterministic algorithms, independent of the dimension. 
This shows that Monte Carlo algorithms can improve the order by more than $n^{-1/2}$ 
for a whole class of natural function spaces.
Note that a similar result (for a different class) was obtained by 
Heinrich et al.~\cite{HHY04}.
\end{abstract} 

\maketitle 

\section{Introduction}

We study Monte Carlo methods, i.e.~randomized cubature formulas, 
for the approximation of the $d$-dimensional integral
\[
I(f) \,=\, \int_{\O} f(x) \dint x, 
\]
where $\O\subset\R^d$ is a bounded, measurable set with an interior point
and $f\colon\R^d\to\R$ is an 
integrable function with support inside $\O$, i.e. 
$\supp(f):=\{x\in\R^d\colon f(x)\neq 0\}\subset\Omega$.
Without loss of generality we assume that $\Omega$ has volume 1. 
In the case $\Omega=[0,1]^d$ we will also study functions 
without boundary conditions, i.e. without the restriction that the support 
is contained in $[0,1]^d$, see Section~\ref{sec:non-periodic}.

The randomized algorithms under consideration are of the form
\begin{equation} \label{eq:algorithms}
M(f) \,=\, \sum_{j=1}^m a_j f(x^j), 
\end{equation}
where the nodes $x^j=(x^j_1,\dots,x^j_d)\in\O$, 
the weigths $a_j\in\R$, $j=1,\dots,m$, 
and the number of points $m\in\N$ are random variables. 
Let $N(M):=\E[m]$ be the expected number of function evaluations 
that are used by $M$.

The algorithm we want to study was introduced recently by Krieg and 
Novak~\cite{KN16} and is based on the deterministic cubature rule of 
Frolov~\cite{Fr76}, which attracted some attention in the past years 
due to its optimality (in order) for numerical integration in nearly 
every classical function space on the cube, see 
e.g.~\cite{DP14,Du93,Du97,DTU16,No16,Sk94,Te93,Te03,MU15} or \cite{UU16} for a 
recent survey of known results. 
We are not aware of an example of a natural function space on the cube, where Frolov's 
cubature rule, combined with some modification for non-periodic functions, 
see Section~\ref{sec:non-periodic}, 
is demonstrable not optimal.

Here we continue the analysis from \cite{KN16} and analyze the following 
random algorithm: \\
Let $B_n\in\R^{d\times d}$, $n>0$, be a suitable sequence of invertible matrices, 
i.e.~we need that the $B_n$ satisfy $\det(B_n)=n$ and~\eqref{eq:prop}.
Let 
$u=(u_1,\dots,u_d)\sim\U([1/2,3/2]^d)$ and 
$v=(v_1,\dots,v_d)\sim\U([0,1]^d)$ be two uniformly distributed random vectors. 
We consider the Monte Carlo method
\begin{equation}\label{eq:alg}
M_n(f) \,:=\, \frac{1}{n}\sum_{x\in\Pn} f(x),
\end{equation}
where
\begin{equation} \label{eq:points}
\begin{split}
\Pn \,&:=\, \Omega \,\cap\, (U B_n)^{-\top}(\Z^d + v) \\
\,&\;=\, \Omega \,\cap\,\left\{B_n^{-\top}(z)\colon 
z=\left(\frac{m_1+v_1}{u_1},\ldots,\frac{m_d+v_d}{u_d}\right),\, 
m\in\Z^d\right\}, 
\end{split}
\end{equation}
$B_n^{-\top}$ is the transposed inverse of $B_n$ and $U=\diag(u)$.
Note that this method has equal weights and satisfies 
$N(M_n)=n$, see~\eqref{eq:number}.


Define the \emph{root mean square error} of a randomized algorithm $M$ 
for a specific function $f\in L_1(\R^d)$ by
\[
\D(M, f) \,:=\, \left(\E\bigl[|I(f) - M(f)|^2\bigr]\right)^{1/2}.
\]
and let 
\begin{equation}\label{eq:Lpo}
L_p^\circ(\Omega) \,:=\, \{f\in L_p(\R^d)\colon \supp(f)\subset\Omega\}.
\end{equation}
\medskip

We will prove the following theorem.

\begin{thm}\label{thm:error-L2}
Let $M_n$ be given by \eqref{eq:alg} and $f\in L_2^\circ(\Omega)$. 
Then,
\[
\Delta(M_n,f) \,\lesssim\, n^{-1/2}\, \|\F f\|_{L_2(D_n)},
\]
where $D_n=\{\xi\in\R^d \colon \prod_{j=1}^d|\xi_j|\gtrsim n\}$
and $\F f$ is the Fourier transform of $f$.
\end{thm}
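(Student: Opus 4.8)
The plan is to condition on the dilation $u$, diagonalize the estimator in the shift variable $v$ via Poisson summation, and then recover both the factor $n^{-1}$ and the region $D_n$ by averaging over $u$. First I would use $\supp(f)\subset\O$ to drop the intersection with $\O$ in \eqref{eq:points}, so that, writing $U=\diag(u)$, the estimator becomes a full lattice sum over $(UB_n)^{-\top}(\Z^d+v)$. For fixed $u$ this is a constant multiple of the $1$-periodization in $v$ of $x\mapsto f\bigl((UB_n)^{-\top}x\bigr)$, so the Poisson summation formula expresses it as a Fourier series in $v$ whose $k$-th coefficient is a multiple of $\F f(UB_n k)$, i.e.\ the Fourier transform sampled on the dual lattice $UB_n\Z^d$. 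The zero-frequency mode reproduces $\F f(0)=I(f)$, so $M_n$ is unbiased, and Parseval's identity in $v$ then yields the conditional variance, giving
\[
\E_v\bigl[|M_n(f)-I(f)|^2\mid u\bigr]\;\lesssim\;\sum_{k\in\Z^d\setminus\{0\}}\bigl|\F f(UB_n k)\bigr|^2.
\]
Here the support and $L_2$ hypotheses on $f$ (which force $f\in L_1$, hence $\F f$ continuous and bounded) are what justify the use of Poisson summation and Parseval.

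Next I would average over $u\sim\U([1/2,3/2]^d)$. Writing $\ell=B_n k$ and performing, for each fixed $k\neq0$, the change of variables $\xi_j=u_j\ell_j$ (with Jacobian $\prod_{j=1}^d|\ell_j|^{-1}$) turns the expectation of each summand into an integral of $|\F f|^2$ over the image box $R_\ell:=\{(u_1\ell_1,\dots,u_d\ell_d)\colon u\in[1/2,3/2]^d\}$. Interchanging sum and integral gives
\[
\E_u\Bigl[\sum_{k\neq0}\bigl|\F f(UB_n k)\bigr|^2\Bigr]\;=\;\int_{\R^d}\bigl|\F f(\xi)\bigr|^2\,W(\xi)\dint\xi,\qquad W(\xi):=\sum_{0\neq\ell\in B_n\Z^d\colon\,\xi\in R_\ell}\frac{1}{\prod_{j=1}^d|\ell_j|}.
\]
Thus the whole problem reduces to controlling the weighted multiplicity $W$.

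The heart of the matter --- and the step I expect to be the main obstacle --- is to show that $W$ is supported in $D_n$ and satisfies $W(\xi)\lesssim n^{-1}$ uniformly. Both facts rest on the defining property \eqref{eq:prop} of the Frolov matrix together with $\det(B_n)=n$. Admissibility gives $\prod_{j}|\ell_j|\gtrsim n$ for every nonzero $\ell\in B_n\Z^d$, and since any $\xi\in R_\ell$ obeys $\prod_{j}|\xi_j|\ge 2^{-d}\prod_{j}|\ell_j|$, the support of $W$ lies in $\{\prod_j|\xi_j|\gtrsim n\}=D_n$. For the pointwise bound, note that $\xi\in R_\ell$ forces $|\ell_j|\ge\tfrac23|\xi_j|$, so each surviving term is $\le(3/2)^d\bigl(\prod_j|\xi_j|\bigr)^{-1}$, while the number of admissible $\ell$ is the number of points of $B_n\Z^d$ with $|\ell_j|\in[\tfrac23|\xi_j|,2|\xi_j|]$ for all $j$. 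This box has volume comparable to $\prod_j|\xi_j|$ and the lattice has covolume $n$, so for $\xi\in D_n$ the count is $\lesssim n^{-1}\prod_j|\xi_j|$. Multiplying the two estimates, the factors $\prod_j|\xi_j|$ cancel and leave $W(\xi)\lesssim n^{-1}$.

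Combining everything, the mean squared error is bounded by $\lesssim n^{-1}\int_{D_n}|\F f|^2=n^{-1}\|\F f\|_{L_2(D_n)}^2$, which is the claim. The one place demanding genuine care is the lattice-point count: the covering box may contain very many points when $\prod_j|\xi_j|\gg n$, so it is essential that the weight $\prod_j|\ell_j|^{-1}$ shrinks at exactly the rate needed to compensate, and that the leading Gauss-counting term dominates the boundary correction precisely on the regime $D_n$ where $\prod_j|\xi_j|\gtrsim n$.
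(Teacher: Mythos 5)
Your route is step-for-step the paper's own: the shift average via periodization and Parseval is exactly Lemma~\ref{lem:bound-L2}, and the dilation average with the substitution $\xi=UB_nk$, the boxes $R_\ell$, and the splitting of the weight $W(\xi)$ into ``size of each term $\le (3/2)^d(\prod_j|\xi_j|)^{-1}$'' times ``number of admissible $\ell$'' is precisely the computation in the paper's proof of Theorem~1$'$ in Section~\ref{subsec:dilation}. The only place you diverge is in how you justify the lattice-point count, and there your argument has a genuine gap.

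You bound the number of points of $B_n\Z^d$ with $|\ell_j|\in[\tfrac23|\xi_j|,2|\xi_j|]$ purely volumetrically: the box has volume $\asymp\prod_j|\xi_j|$, the lattice has covolume $n$, ``so'' the count is $\lesssim n^{-1}\prod_j|\xi_j|$. That implication is false for a general covolume-$n$ lattice, even on $D_n$, and no Gauss-counting/boundary-correction argument can repair it, because the boxes here are arbitrarily anisotropic and the counting error is governed by the lattice geometry, not by volume. Concretely, in $d=2$ let $B_n\Z^2$ be spanned by $(\eps\sqrt n,\eps\sqrt n)$ and $(0,\sqrt n/\eps)$: the determinant is $n$, the point $\xi=(\sqrt n,\sqrt n)$ has $\prod_j|\xi_j|=n$, yet the box $[\tfrac23\sqrt n,2\sqrt n]^2$ contains the $\asymp 1/\eps$ points $k\,(\eps\sqrt n,\eps\sqrt n)$ with $k\eps\in[\tfrac23,2]$, while your volumetric bound predicts $O(1)$. (Such a lattice of course violates \eqref{eq:prop}; without \eqref{eq:prop} the theorem itself can fail, e.g.\ when $B_n\Z^d$ has points on a coordinate hyperplane the error contains samples of $|\F f|^2$ along lines, which no volume integral of $\F f$ controls.) So the Frolov property must enter the count as the actual mechanism, not merely as a background hypothesis, and this is what the paper isolates as Lemma~\ref{lem:boxes}: for distinct $x,y\in B_n\Z^d$ one has $x-y\in B_n(\Z^d\setminus\{0\})$, hence $\prod_j|x_j-y_j|\ge n/d_B$ by \eqref{eq:prop}, so every axis-parallel box of volume $<n/d_B$ contains at most one lattice point; subdividing $[0,2\xi]$ into $\lfloor d_B\vol_d([0,2\xi])/n+1\rfloor$ such pieces (the piece containing the origin contributing no nonzero point) gives the count $\le d_B\,2^d\prod_j|\xi_j|/n$, and as a byproduct the vanishing of the count when $\prod_j|2\xi_j|<n/d_B$, i.e.\ the support statement you derived separately. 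Without this lemma, or an equivalent explicit use of \eqref{eq:prop} in the count, your central estimate $W(\xi)\lesssim n^{-1}\ind_{D_n}(\xi)$ — which you rightly call the heart of the matter — remains unproven.
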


The proof of Theorem~\ref{thm:error-L2} without hidden constants is given in
Section~\ref{subsec:dilation}.
\medskip

We apply Theorem~\ref{thm:error-L2} to obtain error bounds for Sobolev 
spaces with isotropic and mixed smoothness. 
Here we only comment on the results for Sobolev spaces with integer smoothness. 
For the general statement of the results, also in the anisotropic setting, 
see Section~\ref{sec:sobolev}.

In detail, for $s\in\N$ and $1\le p\le\infty$, we consider the \emph{isotropic 
Sobolev spaces} 
\begin{equation*} 
\mr{H}_p^s(\O) \,:=\, \left\{ f\in L_p^\circ(\O)\colon D^\a f\in L_p(\R^d) 
	\text{ for } |\a|_1\le s \right\}
\end{equation*}
and the \emph{mixed Sobolev spaces}
\begin{equation*} 
\mr{\bf H}_p^s(\O) \,:=\, \left\{ f\in L_p^\circ(\O)\colon D^\a f\in L_p(\R^d) 
	\text{ for } |\a|_\infty\le s\right\}
\end{equation*}
equipped with the norms
\begin{equation*}
\|f\|_{H^s_p} \,=\, \|f\|_{L_p}+\sum_{j=1}^d \|D^{s\cdot e_j} f\|_{L_p}
\end{equation*}
and
\begin{equation*}
\|f\|_{{\bf H}^s_p} \,=\, \|f\|_{L_p}+ \sum_{\a\colon \alpha_j\in\{0, s\}}\|D^{\a} f\|_{L_p},
\end{equation*} 
respectively, where $D^\a f$, $\a\in\N^d_0$, denotes the usual weak 
partial derivative of a function~$f$ and 
$e_j$ is the $j$th unit vector in $\R^d$. 
Recall from~\eqref{eq:Lpo} that functions from $\mr{H}_p^s(\O)$ and 
$\mr{\bf H}_p^s(\O)$, respectively, have support inside the bounded, 
measurable set $\O\subset\R^d$.

Let
\[
\sigma_p \,:=\, 
\max\left\{0, \frac1p-\frac12\right\}.
\]
We prove that, for $1\le p\le\infty$,
\[
\Delta(M_n, f) \,\lesssim\, 
n^{-s/d-1/2 \,+\, \sigma_p}\, \|f\|_{H^s_p}
\]
for $f\in\mr{H}^s_p(\O)$ with $s/d\ge\sigma_p$, 
see Theorem~\ref{thm:main1},
and 
\[
\Delta(M_n, f) \,\lesssim\, 
n^{-s-1/2 \,+\, \sigma_p}\, \|f\|_{{\bf H}^s_p} \\
\]
for $f\in\mr{\bf H}^s_p(\O)$ with $s\ge\sigma_p$,
see Theorem~\ref{thm:main2}.
Note that for $p\ge2$ and $s\ge0$ the result for mixed Sobolev spaces reads
\[
\Delta(M_n, f) \,\lesssim\, n^{-s-1/2}\; \|f\|_{{\bf H}^s_p}.
\]
In Section~\ref{sec:non-periodic} we present a modification of the algorithm 
that has the same orders of convergence for functions defined on the unit 
cube $[0,1]^d$ without boundary conditions.

\medskip

For other algorithms the upper bound for isotropic spaces 
is known for 
some time and this order of $n$ cannot be improved by any other algorithm, 
see e.g.~Heinrich~\cite{He93} or Novak~\cite{No88}. 
The algorithms are based on 
($L_p$-)approximation of the integrand and 
the standard Monte Carlo method applied to the residual. 
This works since 
the optimal order for approximation and integration is the same for isotropic spaces. 
However, this method is not quite practical. 
For mixed Sobolev spaces the optimal order for approximation is different, 
see e.g.~the survey~\cite{DTU16}, 
and hence, this technique does not lead to an optimal result. 
For other approaches to randomized numerical integration and for 
results for other function spaces see 
e.g.~\cite{Ba59, Ba62, Ba15, HHY04, H10, KWW06, NW10, SKJ02}.

The case of deterministic algorithms is better understood, see~\cite{DP14,DTU16,HNUW16,No16,Te93,Te03}. 
E.g., it is known that the optimal order for deterministic algorithms 
in $H^s_p([0,1]^d)$ and ${\bf H}^s_p([0,1]^d)$, see Section~\ref{sec:non-periodic}, 
is $n^{-s/d}$ for $s/d>1/p$, and $n^{-s}(\log n)^{(d-1)/2}$ for $s>\max\{1/p,1/2\}$, 
respectively. The restriction to $s/d>1/p$ (resp.~$s>1/p$) is necessary to ensure 
that the functions are continuous. 
In particular, these optimal orders are achieved by Frolov's cubature rule, 
which is the deterministic cubature rule given by \eqref{eq:alg} and \eqref{eq:points} 
with the random elements $u$ and $v$ replaced by $(1,\dots,1)$ and $(0,\dots,0)$, 
respectively,
see e.g.~\cite{Te03}. 
For $p>2$ and $1/p<s<1/2$ the optimal order for ${\bf H}^s_p([0,1]^d)$ is still not known, 
even for $d=2$. See \cite{UU16} for some recent progress on the upper bound in this range.

The randomized algorithm $M_n$ from \eqref{eq:alg} was first considered in \cite{KN16}. 
The idea behind the algorithm is similar to the one of 
Bakhvalov~\cite{Ba62}, who analyzed an integration lattice 
rule (of Korobov type) with a random number of points. 
In~\cite{KN16} the optimal order of $M_n$ for the isotropic Sobolev spaces $H^s_2$ with 
$s\in\N$ and $s/d>1/2$ is proven. 
The authors also show the (not optimal) upper bound $n^{-s-1/2}\, (\log n)^{(d-1)/2}$ 
for ${\bf H}_2^s$ with $s\in\N$. 

Here, we generalize the results of~\cite{KN16} to $p\neq2$, $s\notin\N$ and 
to anisotropic smoothness. We also consider the case of discontinuous functions, 
i.e.~$0\le s/d\le1/p$ and $0\le s\le1/p$ for isotropic and mixed Sobolev spaces, respectively. 
Moreover, we improve the upper bound by a 
certain power of $\log n$, i.e., we show that there 
is no logarithm at all in the upper bound. 
This bound is optimal. 
For this note that, by the results of~\cite{NUU16}, integration in the space 
${\bf H}_p^s([0,1]^d)$ 
is not harder than integration in 
$\mr{\bf H}_p^s$ with $\Omega=[0,1]^d$. 
Moreover, it is obvious that lower bounds for the 
one-dimensional classes ${\bf H}_p^s([0,1])=H_p^s([0,1])$ also hold for 
${\bf H}_p^s([0,1]^d)$ and the optimal order for these classes is $n^{-s-1/2+\sigma_p}$, see 
e.g.~\cite{He93, No88}. 
The optimality in order for general $\O$ then follows from the existence of a 
(possibly very small) cube inside $\Omega$. 
Hence, we obtain the following theorem on the optimal order for the worst case error 
of randomized algorithms for mixed Sobolev spaces.
For a normed space of functions $F$, let
\[
\Delta(M,F) \,:=\, \sup_{f\in F}\,\frac{\Delta(M,f)}{\|f\|_F}.
\]

\begin{thm}\label{thm:optimal} 
Let $s\ge0$ and $1\le p\le\infty$ ($1<p<\infty$ if $s\notin\N$)
with $s\ge\sigma_p$
and $\O$ be a bounded, measurable set 
with an interior point. 
We have 
\[
\inf_{M}\, \Delta\bigl(M,\mr{\bf H}^s_p(\O)\bigr) \,\asymp\, n^{-s-1/2+\sigma_p}
\]
and, for $\O=[0,1]^d$,
\[
\inf_{M}\, \Delta\bigl(M,{\bf H}^s_p([0,1]^d)\bigr) 
\,\asymp\, \inf_{M}\, \Delta\bigl(M,\mr{\bf H}^s_p([0,1]^d)\bigr) \,\asymp\, n^{-s-1/2+\sigma_p},
\]
where the infima are taken over all algorithms of the form~\eqref{eq:algorithms} 
with $N(M)\le n$.
\end{thm}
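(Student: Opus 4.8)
\emph{Upper bounds.} The plan is to prove matching upper and lower bounds, respecting the budget $N(M)\le n$ throughout; the relation $\asymp$ then follows by combining them. For $\mr{\bf H}^s_p(\O)$ the algorithm $M_n$ from \eqref{eq:alg} already suffices: Theorem~\ref{thm:main2} gives $\Delta(M_n,\mr{\bf H}^s_p(\O))\lesssim n^{-s-1/2+\sigma_p}$, and since $N(M_n)=n$ this bounds $\inf_M$ from above. For the class ${\bf H}^s_p([0,1]^d)$ without boundary conditions the matching upper bound is provided by the modified algorithm of Section~\ref{sec:non-periodic}.

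\emph{Lower bounds.} Here I would reduce to dimension one. The optimal order for randomized integration of $H^s_p([0,1])$ is the classical $n^{-s-1/2+\sigma_p}$, see \cite{He93,No88}, and the matching lower bound is already attained on the subclass of functions with support inside $[0,1]$, i.e.\ on $\mr{\bf H}^s_p([0,1])$. To lift this to dimension $d$, fix a nontrivial $\phi\in\mr{\bf H}^s_p([0,1])$ with $\int\phi\neq0$ and, for $g\in\mr{\bf H}^s_p([0,1])$, set $f(x):=g(x_1)\prod_{k=2}^d\phi(x_k)$. Then $f\in\mr{\bf H}^s_p([0,1]^d)$, the mixed norm factorizes so that $\|f\|_{{\bf H}^s_p}\asymp\|g\|_{H^s_p}$, and $I(f)=\bigl(\int\phi\bigr)^{d-1}I(g)$. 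Any admissible $M$ evaluated on such $f$ is, after absorbing the factors $\prod_{k=2}^d\phi(x^j_k)$ into the weights, a randomized quadrature for $g$ using $N\le n$ evaluations; hence $\Delta(M,f)$ equals, up to the constant $(\int\phi)^{d-1}$, the one-dimensional error for $g$. Taking the supremum over $g$ in the unit ball of $\mr{\bf H}^s_p([0,1])$ yields $\inf_M\Delta(M,\mr{\bf H}^s_p([0,1]^d))\gtrsim n^{-s-1/2+\sigma_p}$. For a general $\O$ with an interior point, pick a cube $Q=a+\delta[0,1]^d\subset\O$ and compose the construction with the affine bijection $[0,1]^d\to Q$; this keeps the functions in $\mr{\bf H}^s_p(\O)$ and alters norms and integrals only by $\delta$-dependent constants, so the same lower bound survives.

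\emph{The chain on the cube and the main obstacle.} The inclusion $\mr{\bf H}^s_p([0,1]^d)\hookrightarrow{\bf H}^s_p([0,1]^d)$ (restriction to the cube does not increase the norm) gives $\inf_M\Delta(M,\mr{\bf H}^s_p([0,1]^d))\le\inf_M\Delta(M,{\bf H}^s_p([0,1]^d))$, while the reverse inequality up to constants is precisely the result of \cite{NUU16} that integration on the cube without boundary conditions is no harder than with support in the cube. I expect the one place genuinely needing care to be the lower-bound reduction: one must verify that the factorization of the mixed norm and the affine rescaling onto the subcube really do preserve the exponent $-s-1/2+\sigma_p$, and that passing from the $d$-dimensional algorithm to the induced one-dimensional quadrature keeps the expected number of nodes below $n$. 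Once these are checked, all three quantities agree in order and the theorem follows.
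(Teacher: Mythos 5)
Your proposal is correct, and its skeleton matches the paper's: upper bounds from Theorem~\ref{thm:main2} (for $\mr{\bf H}^s_p(\O)$, using $N(M_n)=n$) and from the modified algorithm $\widebar{M}_n$ of Section~\ref{sec:non-periodic} (for ${\bf H}^s_p([0,1]^d)$), lower bounds reduced to the classical one-dimensional results of \cite{He93,No88}, and a small-cube argument for general $\O$. Where you genuinely deviate is in the direction of the transfer between the two cube classes. The paper lifts the one-dimensional lower bound to ${\bf H}^s_p([0,1]^d)$ (no boundary conditions) via functions depending only on $x_1$, and then passes to the compact-support class $\mr{\bf H}^s_p([0,1]^d)$ by invoking the nontrivial result of \cite{NUU16} that integration on the cube without boundary conditions is not harder than integration of compactly supported functions. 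You instead prove the lower bound \emph{directly} for $\mr{\bf H}^s_p([0,1]^d)$ by tensorizing $g$ with compactly supported bumps $\phi$ and observing that any $d$-dimensional algorithm induces, after absorbing $\prod_{k\ge2}\phi(x^j_k)$ into the weights and dividing by $(\int\phi)^{d-1}$, a one-dimensional algorithm with the same expected cardinality; the no-boundary-condition class then inherits the lower bound through the trivial inclusion $\mr{\bf H}^s_p([0,1]^d)\hookrightarrow{\bf H}^s_p([0,1]^d)$. This makes the lower-bound side more self-contained — \cite{NUU16} is then needed only for the upper bound on ${\bf H}^s_p([0,1]^d)$, where it enters anyway through the boundedness of the change-of-variable operator $T$ — and it makes explicit the induced-algorithm argument the paper dismisses as ``obvious.'' The price is that you need the slightly stronger form of the one-dimensional lower bound, namely that it holds already on the subclass of functions supported strictly inside $[0,1]$; this is true (the fooling functions in the cited proofs are sums of interior bumps, and the tensor factorization of the ${\bf H}^s_p$-norm you use holds exactly, since the multiplier $\wt\nu_s$ is itself a tensor product), but it is a statement about the \emph{proofs} in \cite{He93,No88} rather than about their stated results, so it deserves an explicit remark rather than a bare citation.
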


It is interesting to note that the optimal order for isotropic Sobolev spaces 
$H^s_p([0,1]^d)$ immediately follows from Theorem~\ref{thm:optimal} and 
the embedding $H^s_p\hookrightarrow{\bf H}^{s/d}_p$.

\bigskip
{\bf Notation.} As usual $\N$ denotes the natural numbers, $\N_0=\N\cup\{0\}$, 
$\Z$ denotes the integers and  
$\R$ (resp.~$\R_+$) the real (resp.~nonnegative) numbers. 
The letter $d$ is always reserved for the underlying dimension in $\R^d, \Z^d$ etc. We denote
by $\langle x,y\rangle$ or $x y$ the usual Euclidean inner product in $\R^d$.
For $a\in\R$ let $\lfloor a\rfloor\in\Z$ be the largest integer smaller or 
equal to $a$.
For $0<p\leq \infty$ and $x\in \R^d$ we let $|x|_p = (\sum_{i=1}^d |x_i|^p)^{1/p}$ 
with the usual modification in the case $p=\infty$. 
We further denote by $L_p(\R^d)$ the space of Lebesgue-measurable functions 
$f\colon\R^d\to\R$ such that $\|f\|_p:=(\int_{\R^d}|f(x)|^p\dint x)^{1/p}<\infty$. 
By $x\le y$ for $x,y\in\R^d$ we mean that the inequality holds component-wise.
For $u=(u_1,\dots,u_d)\in\R^d$ we write $\diag(u)$ for the $d\times d$-diagonal 
matrix with diagonal entries $u_1,\dots,u_d$.
For a bounded set $A\subset\R^d$ with positive volume we write 
$\U(A)$ for the uniform distribution in $A$. 
The logarithm $\log$ will always be in base 2. 
If $X$ and $Y$ are two (quasi-)normed spaces, the (quasi-)norm
of an element $x$ in $X$ will be denoted by $\|x\|_X$. 
The symbol $X \hookrightarrow Y$ indicates that the identity operator is continuous. 
For two sequences of real numbers $a_n$ and $b_n$ we will write 
$a_n \lesssim b_n$ if there exists a constant $c>0$ such that 
$a_n \leq c\,b_n$ for all $n$. We will write $a_n \asymp b_n$ if 
$a_n \lesssim b_n$ and $b_n \lesssim a_n$.

\medskip
\section{Preliminaries}

In this section we provide the tools that are needed to prove our results.
That is, we give a detailed description of the algorithm under consideration 
together with the important properties of the underlying deterministic 
point set and state Poisson's summation formula.

\subsection{The algorithm}\label{subsec:alg}

We analyze the algorithm that was introduced by Krieg and Novak~\cite{KN16} and which 
is based on the cubature rule of Frolov~\cite{Fr76}.

For this, consider an invertible matrix $B\in\R^{d\times d}$ and define the cubature rule

\begin{equation}\label{eq:alg_det}
Q_{B,v}(f) \,=\, \frac{1}{|\det B|}\sum_{m\in\Z^d} f\left(B^{-\top}(m+v)\right)
\end{equation}
where $v\in[0,1]^d$.
We follow~\cite{Fr76} and choose a (generator) matrix $B\in\R^{d\times d}$ 
with the property
\begin{equation}\label{eq:prop1}
\prod_{j=1}^d |(Bm)_j| \,\ge\,1 \quad\text{ for all } m\in\Z^d\setminus\{0\}.
\end{equation}
We will call such a matrix $B$ a \emph{Frolov matrix}. 
Clearly, every Frolov matrix is invertible.
For constructions of such matrices $B$ see e.g.~\cite{Fr76,Te93,MU15}.

\begin{rem}
It is proven in \cite[Lemma~3.1]{Sk94} that the property \eqref{eq:prop1} 
for $B$ is equivalent to the same property for $c B^{-\top}$ with some 
$c<\infty$. In numerical experiments one could therefore interchange the 
roles of $B$ and $B^{-\top}$ 
and use the lattice points $B(\Z^d+v)$ in \eqref{eq:alg_det}. 
We use this definition to ease the notation.
\end{rem}

Let $d_B:=\det(B)$ and define, for $n\in\R$, 
the matrices $B_n:=(n/d_B)^{1/d}B$. 
These matrices clearly satisfy $\det(B_n)=n$ and 
\begin{equation}\label{eq:prop}
\prod_{j=1}^d |(B_n m)_j| \,\ge\,n/d_B \quad\text{ for all } m\in\Z^d\setminus\{0\}.
\end{equation}


The \emph{randomized Frolov cubature rule} $M_n$ uses the two independent random vectors 
$u$ and $v$ that are uniformly distributed in $[1/2,3/2]^d$ and $[0,1]^d$, respectively.
We define the $d\times d$-diagonal matrix $U={\rm diag}(u)$.
Then, in view of~\eqref{eq:alg} and~\eqref{eq:alg_det} we have 
\begin{equation*}
M_n(f) \,=\, Q_{U B_n,v}(f).
\end{equation*}
We call $u$ (resp. $U$) the 
\emph{random dilation} and $v$ the \emph{random shift} of the algorithm~$M_n$.

It is known from \cite[Lemma~3]{KN16} that $M_n$ is well-defined and unbiased on $L_1(\R^d)$.
Moreover, if we consider functions that are supported in a bounded, measurable 
set $\Omega\subset\R^d$ with $\vol_d(\O)=1$, we know that 
the expected number of (non-zero) function evaluations that are used by the algorithm 
$M_n$, i.e. $N(M_n)$, equals $n$.
To see this, note that 
\begin{equation}\label{eq:number}
\begin{split}
N(M_n) \,&=\, \E\left[\sum_{m\in\Z^d} \ind_\O\left((UB_n)^{-\top}(m+v)\right)\right]
\,=\, \E\left[\sum_{m\in\Z^d} \ind\!\Bigl(m+v\in(UB_n)^{\top}(\O)\Bigr)\right]\\
\,&=\, \E_u\left[\sum_{m\in\Z^d} \vol_d\left((m+[0,1]^d)\cap(UB_n)^{\top}(\O)\right)\right]
\,=\, \E_u\left[\vol_d\left((UB_n)^\top(\O)\right)\right]\\
\,&=\, \E_u\left[\det(UB_n)\right] \, \vol_d(\O) 
\,=\, n\cdot\vol_d(\O) 
\,=\, n.
\end{split}
\end{equation}

\begin{rem}
The choice of the set $[1/2,3/2]^d$ for the random dilataion is quite arbitrary. 
Every set of the form $[1-c,1+c]^d$ with $c\in(0,1)$ would lead to the same results. 
However, the choice $c=1/2$ optimizes the constant in our upper bound.
\end{rem}


\subsection{Counting lattice points in boxes}

We still have to exploit the crucial property of the 
Frolov matrices that are used to construct our cubature rule. 
This property is, besides the fact that 
$B_n^{-\top}(\Z^d)$ is a lattice, 
that one can easily bound the number of points of the \emph{dual lattice} 
$B_n(\Z^d)$ in axis-parallel boxes.

There are many references that study this problem and state the following 
bound together with further properties of such lattices, see 
e.g.~\cite{Fr76,Fr77,Fr80,Le69,Sk94,Te93,MU15}. However, we only need 
a special case here and we give the short proof for convenience.

\begin{lem} \label{lem:boxes}
Let $B_n$ satisfy \eqref{eq:prop}. 
Then, for each axis-parallel box $R\subset\R^d$ containing the 
origin we have 
\[
\Bigl\vert B_n\bigl(\Z^d\setminus\{0\}\bigr)\cap R \Bigr\vert 
\,\le\, d_B\, \frac{\vol_d(R)}n.
\]
In particular, the left hand side is zero if $\vol_d(R)<n/d_B$.
\end{lem}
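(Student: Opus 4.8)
The plan is to reduce the estimate to a lattice point count for the \emph{normalized} Frolov lattice $B\Z^d$ and then to compare the number of points with a volume.

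I would first settle the final sentence, which also explains why the bound is interesting. Writing the box as $R=\prod_{j=1}^d[a_j,b_j]$ with $a_j\le0\le b_j$, its side lengths are $\ell_j=b_j-a_j$ and $\vol_d(R)=\prod_{j=1}^d\ell_j$. If $x=B_n m\in R$ for some $m\in\Z^d\setminus\{0\}$, then $x_j\in[a_j,b_j]$ gives $|x_j|\le\max\{-a_j,b_j\}\le\ell_j$ for every $j$; multiplying over $j$ and invoking \eqref{eq:prop} yields
\[
\frac{n}{d_B}\,\le\,\prod_{j=1}^d|(B_n m)_j|\,=\,\prod_{j=1}^d|x_j|\,\le\,\prod_{j=1}^d\ell_j\,=\,\vol_d(R).
\]
Hence a nonzero lattice point can meet $R$ only when $\vol_d(R)\ge n/d_B$, which is the ``in particular'' claim and shows that the asserted bound is nontrivial precisely in the regime $\vol_d(R)\ge n/d_B$.

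For the counting bound I would use the homogeneity $B_n=(n/d_B)^{1/d}B$. Scaling every coordinate by $(d_B/n)^{1/d}$ carries $B_n\Z^d$ onto $B\Z^d$, which has covolume $d_B$ and, by \eqref{eq:prop1}, satisfies $\prod_{j=1}^d|(Bm)_j|\ge1$ for all $m\ne0$; under the same scaling $R$ becomes an axis-parallel box $R^\ast\ni0$ with $\vol_d(R^\ast)=(d_B/n)\,\vol_d(R)$. Since scaling is a bijection on the relevant point sets, the lemma is equivalent to the normalized statement that $B\Z^d$ has at most $\vol_d(R^\ast)$ nonzero points in $R^\ast$. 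I would prove this by a packing argument: to each admissible point $\lambda=Bm\in R^\ast$ attach a translate of one fixed fundamental domain of $B\Z^d$ (of volume $d_B$). These translates are pairwise disjoint, so if $K$ denotes the number of such points then $K\,d_B$ is the volume of their union, and it remains to bound that union by $d_B\,\vol_d(R^\ast)$.

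The main obstacle is exactly this last containment. The fundamental cells are skew parallelepipeds whereas $R^\ast$ is axis-parallel, so cells anchored near $\partial R^\ast$ protrude and create the usual lower-order (``surface'') term that must be absorbed into the constant. This is the step where property \eqref{eq:prop1} is indispensable rather than cosmetic: since $\prod_j|\lambda_j|\ge1$, an admissible point cannot be close to a coordinate hyperplane without being far out in the remaining directions, which forces $R^\ast$ to be \emph{balanced}---thinness of the box in one direction is compensated by width in the others, keeping $\vol_d(R^\ast)\ge1$. I would use this balance to estimate the protrusion direction by direction and so bound the union by $d_B\,\vol_d(R^\ast)$. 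The delicate point I expect to spend the most care on is making this protrusion estimate uniform over \emph{all} boxes, in particular very thin ones, where a naive Minkowski enlargement is far too lossy and the product property \eqref{eq:prop1} has to be invoked directly.
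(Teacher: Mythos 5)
Your proof of the ``in particular'' statement is correct, and the rescaling to the normalized lattice $B\Z^d$ is harmless. But the core of the lemma --- the counting bound $K\le \vol_d(R^\ast)$ --- is never actually proved: your packing argument only yields $K\,d_B\le \vol_d(R^\ast+F)$, where $F$ is a fundamental cell, and you explicitly defer the containment $\vol_d(R^\ast+F)\le d_B\,\vol_d(R^\ast)$ to a ``protrusion estimate'' that you do not carry out. The heuristic you offer in its place does not suffice. The product property applied to \emph{individual} points only gives $\vol_d(R^\ast)\ge 1$ whenever $R^\ast$ contains a nonzero lattice point; it says nothing about the volume of the Minkowski enlargement. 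Concretely, take $R^\ast=[-\eps,\eps]\times[-M,M]^{d-1}$ with $2\eps(2M)^{d-1}=V$ fixed and $M\to\infty$: then $\vol_d(R^\ast+F)\gtrsim M^{d-1}$ blows up relative to $d_B V$, so no direction-by-direction protrusion bound can close this gap --- cells anchored near the long faces of a thin box stick out by a fixed amount in the thin direction, and thin boxes are exactly the regime that matters in the application (the boxes $R_k$ in Section~\ref{subsec:dilation} are hyperbolic-cross-like and can be arbitrarily eccentric).

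The missing idea is to use the product property for \emph{differences} of lattice points rather than for the points themselves. Since $x-y\in B_n(\Z^d\setminus\{0\})$ for distinct $x,y\in B_n(\Z^d)$, property \eqref{eq:prop} forces every axis-parallel box containing two distinct lattice points to have volume at least $\prod_{j=1}^d|x_j-y_j|\ge n/d_B$. This is what the paper exploits: slice $R$ into $\lfloor d_B\,\vol_d(R)/n+1\rfloor$ axis-parallel sub-boxes each of volume strictly less than $n/d_B$ (equal slices along one axis suffice); each sub-box then contains at most one lattice point, and the sub-box containing the origin contains no point of $B_n(\Z^d\setminus\{0\})$, giving the count $\lfloor d_B\,\vol_d(R)/n+1\rfloor-1\le d_B\,\vol_d(R)/n$. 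This subdivision argument needs no fundamental domains and has no boundary terms at all, which is precisely why it achieves the clean constant $d_B$ uniformly over all boxes, including the thin ones where your packing breaks down.
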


\begin{proof}
From \eqref{eq:prop}, together with the fact that $B_n(\Z^d)$ is a lattice, 
we obtain that every axis-parallel box $R'$ that contains at least two 
points $x,y\in B_n(\Z^d)$ must satisfy 
$\vol_d(R')\ge\prod_{j=1}^d|x_j-y_j|\ge n/d_B$.
Here we used that $x-y\in B_n\bigl(\Z^d\setminus\{0\}\bigr)$. 
Now we divide the box $R$ into $\lfloor d_B\cdot\vol_d(R)/n + 1\rfloor$ 
axis-parallel boxes of volume smaller $n/d_B$, which consequently contain 
at most one point. Moreover, by assumption, one of these boxes is empty. 
This proves the upper bound 
$\lfloor d_B\cdot\vol_d(R)/n + 1\rfloor-1 \le d_B\cdot\vol_d(R)/n$. \\
\end{proof}


For a comment on the magnitude of the constant $d_B$ see Remark~\ref{rem:dB}.


\subsection{Poisson's summation formula}

The Fourier transform of a function $f\in L_1(\R^d)$ is defined by 
\[
\F f(\xi) \,=\, \int_{\R^d} f(x)\, e^{-2\pi i \l \xi,x\r} \dint x, \qquad \xi\in\R^d,
\]
and the inverse Fourier transform is given by $\F^{-1}f(\xi)=\F f(-\xi)$.

The analysis of the error of cubature formulas that use nodes from a lattice 
is naturally related to 
an application of Poisson's summation formula and variations thereof.
A more detailed treatment and a proof of the following lemma 
can be found, e.g., in~\cite[Thm.~VII.2.4 \& Cor.~VII.2.6]{SW71}.

\begin{lem}\label{lem:periodization}
Let $f\in L_2^\circ(\O')$ for some bounded $\O'\subset\R^d$. 
Then its periodization 
$\sum_{\ell\in\Z^d} f(\ell+x)$ is a (1-periodic) function in 
$L_2([0,1]^d)$ that has the 
Fourier expansion
\[
\sum_{k\in\Z^d} \F f(k)\, e^{2\pi i \l k, x\r}.
\]
\end{lem}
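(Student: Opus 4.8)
The plan is to lean on the compact support hypothesis, which is exactly what makes this instance of Poisson's formula elementary. Since $f$ vanishes outside the bounded set $\O'$, I first fix $R>0$ with $\O'\subset[-R,R]^d$ and observe that for $x\in[0,1]^d$ the translate $f(\ell+x)$ can be nonzero only when $\ell+x\in\O'$, i.e.\ only for $\ell$ in the finite set $(\O'-x)\cap\Z^d\subset[-R-1,R]^d$. Hence the number of contributing indices is bounded by a finite constant $N=N(R,d)$, uniformly in $x$ over the cube. Consequently $g(x):=\sum_{\ell\in\Z^d}f(\ell+x)$ is a \emph{finite} sum for a.e.\ $x$, so it is well-defined and measurable, and it is manifestly $1$-periodic because translating $x$ by a lattice vector merely reindexes the sum.

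Next I would establish that $g\in L_2([0,1]^d)$. Since at most $N$ summands are nonzero, Cauchy--Schwarz gives
\[
\abs{g(x)}^2 \,\le\, N\sum_{\ell\in\Z^d}\abs{f(\ell+x)}^2 .
\]
Integrating over the cube and using that the translated cubes $\{\ell+[0,1]^d\}_{\ell\in\Z^d}$ tile $\R^d$, the substitution $y=\ell+x$ yields
\[
\norm{g}_{L_2([0,1]^d)}^2 \,\le\, N\sum_{\ell\in\Z^d}\int_{[0,1]^d}\abs{f(\ell+x)}^2\dint x \,=\, N\int_{\R^d}\abs{f(y)}^2\dint y \,=\, N\,\norm{f}_2^2 \,<\,\infty .
\]

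It remains to identify the Fourier coefficients of $g$. For $k\in\Z^d$ I would write
\[
\int_{[0,1]^d} g(x)\,e^{-2\pi i \l k,x\r}\dint x \,=\, \sum_{\ell\in\Z^d}\int_{[0,1]^d} f(\ell+x)\,e^{-2\pi i \l k,x\r}\dint x ,
\]
where the interchange of sum and integral is legitimate precisely because the inner sum is finite. Substituting $y=\ell+x$ and using $e^{2\pi i \l k,\ell\r}=1$ for $k,\ell\in\Z^d$, each term equals $\int_{\ell+[0,1]^d} f(y)\,e^{-2\pi i \l k,y\r}\dint y$; summing over $\ell$ reassembles the integral over $\R^d$ and produces exactly $\F f(k)$. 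Thus the $k$-th Fourier coefficient of $g$ is $\F f(k)$.

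Finally, because $g\in L_2([0,1]^d)$ and the trigonometric system $\{e^{2\pi i \l k,\cdot\r}\}_{k\in\Z^d}$ is a complete orthonormal basis of $L_2([0,1]^d)$, the Fourier series of $g$ converges to $g$ in $L_2$, giving the asserted expansion $g=\sum_{k\in\Z^d}\F f(k)\,e^{2\pi i \l k,x\r}$. I do not anticipate a genuine obstacle: the one point requiring care is the interchange of summation and integration, but the uniform finiteness of the periodization sum over the cube---a direct consequence of compact support---reduces this to a finite interchange and sidesteps the decay/convergence conditions that the general Poisson summation formula demands.
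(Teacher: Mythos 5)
Your proof is correct, and every step holds up: the uniform bound $N$ on the number of contributing translates over $x\in[0,1]^d$ is valid (with the paper's pointwise definition of support the sum is in fact finite for \emph{every} $x$, not just almost every $x$), the Cauchy--Schwarz estimate $\abs{g(x)}^2\le N\sum_\ell\abs{f(\ell+x)}^2$ combined with the tiling argument gives $g\in L_2([0,1]^d)$, the coefficient computation is a genuinely finite interchange over the fixed index set $\Z^d\cap[-R-1,R]^d$, and completeness of the trigonometric system yields the expansion in the $L_2$ sense, which is all the lemma asserts. (One point worth making explicit: $\F f(k)$ is defined pointwise because $f\in L_2$ with bounded support lies in $L_1(\R^d)$ by Cauchy--Schwarz, though your computation identifies its value directly anyway.) Note, however, that the paper offers no proof of its own here: it simply cites the general periodization results of Stein and Weiss \cite[Thm.~VII.2.4 \& Cor.~VII.2.6]{SW71}, which treat the periodization of an arbitrary $f\in L_1(\R^d)$ (where the sum converges in $L_1([0,1]^d)$ rather than being locally finite, and the coefficient identification requires Fubini-type arguments). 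Your argument is a self-contained, elementary specialization: compact support collapses the periodization to a uniformly finite sum, so you sidestep the convergence machinery the general theorem needs, at the price of proving only the special case --- which is exactly the case the paper uses, since the lemma is applied to $g(x)=f(B^{-\top}x)$ with $f$ supported in $\O$. That trade-off is entirely appropriate here, and your closing remark correctly identifies the sum--integral interchange as the only point requiring care.
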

\medskip


\section{The general error bound} \label{sec:error-main}

We now prove the most general form of our main result. 
We will do this in two sections to treat the random shift and the 
random dilation separately.

\subsection{Random shift}
The following lemma improves on 
\cite[Lemma~2]{KN16} and is one of the key 
ingredients in our proof.

\medskip
\begin{lem}\label{lem:bound-L2}
Let $B\in\R^{d\times d}$ be an invertible matrix, $f\in L_2^\circ(\O)$ 
and $v\sim\U([0,1]^d)$. Then, 
\[
\E_v\left[|I(f)-Q_{B,v}(f)|^2\right] \;=\; 
\sum_{k\in\Z^d\setminus\{0\}} |\F f(Bk)|^2.
\]
\end{lem}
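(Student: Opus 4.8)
The plan is to compute the variance of the cubature rule $Q_{B,v}(f)$ with respect to the random shift $v$ directly, using the explicit Fourier-analytic description of the periodization supplied by Lemma~\ref{lem:periodization}. The key observation is that $Q_{B,v}(f)$ is, up to the change of variables induced by $B$, exactly the value at the shift $v$ of a periodized function whose Fourier coefficients are the $\F f(Bk)$. Averaging $|Q_{B,v}(f)|^2$ over $v$ is then an application of Parseval's identity, and the zero-frequency term will be precisely $|I(f)|^2$, which cancels against $I(f)$.

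\medskip

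First I would reduce to the case $B=\mathrm{Id}$ (or handle $B$ throughout) by the substitution $g(x):=\frac{1}{|\det B|}f(B^{-\top}x)$, noting that $I(g)=I(f)$ and that $Q_{B,v}(f)=\sum_{m\in\Z^d} g(m+v)$ is the periodization of $g$ evaluated at $v$. Here I would use the standard fact that $\F g(k)=\F f(Bk)$, which follows from the linear change of variables in the Fourier integral together with the factor $|\det B|^{-1}$; this is the only place the matrix $B$ really enters. Since $f\in L_2^\circ(\O)$ with $\O$ bounded, $g$ is again compactly supported and square-integrable, so Lemma~\ref{lem:periodization} applies to $g$ and tells us that the periodization $P(v):=\sum_{m\in\Z^d} g(m+v)$ lies in $L_2([0,1]^d)$ with Fourier expansion $P(v)=\sum_{k\in\Z^d}\F g(k)\,e^{2\pi i\l k,v\r}=\sum_{k\in\Z^d}\F f(Bk)\,e^{2\pi i\l k,v\r}$.

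\medskip

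Next I would extract the integral. The zero-th Fourier coefficient of $P$ is $\int_{[0,1]^d}P(v)\dint v=\int_{\R^d}g(x)\dint x=I(g)=I(f)$, which also equals $\F f(B\cdot 0)=\F f(0)$. Hence $I(f)-Q_{B,v}(f)=I(f)-P(v)=-\sum_{k\in\Z^d\setminus\{0\}}\F f(Bk)\,e^{2\pi i\l k,v\r}$, i.e.\ subtracting the integral removes exactly the $k=0$ mode. Taking $\E_v$ of the squared modulus and invoking Parseval on $[0,1]^d$ (the exponentials $\{e^{2\pi i\l k,\cdot\r}\}_{k\in\Z^d}$ form an orthonormal basis) gives
\[
\E_v\bigl[|I(f)-Q_{B,v}(f)|^2\bigr]=\sum_{k\in\Z^d\setminus\{0\}}|\F f(Bk)|^2,
\]
as claimed.

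\medskip

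The main obstacle I anticipate is not any single hard estimate but rather the justification of the interchanges: that $P\in L_2([0,1]^d)$ so that Parseval is legitimate, and that the Fourier coefficients of the periodization are genuinely the point samples $\F g(k)$ rather than something requiring an extra convergence argument. Both are handed to us cleanly by Lemma~\ref{lem:periodization}, whose hypotheses (compact support, $L_2$) are exactly met here, so the real care is in setting up the substitution $g(x)=|\det B|^{-1}f(B^{-\top}x)$ correctly and verifying the transformation rule $\F g(k)=\F f(Bk)$ with the right placement of $|\det B|$. Once that bookkeeping is pinned down, the rest is a one-line application of orthonormality.
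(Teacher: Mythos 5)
Your proof is correct and follows essentially the same route as the paper: apply the periodization lemma (Lemma~\ref{lem:periodization}) to the dilated function $g$, identify the Fourier coefficients of $Q_{B,v}(f)$ as $\F f(Bk)$, recognize $I(f)=\F f(0)$ as the zero mode, and conclude by Parseval. The only cosmetic difference is that you absorb the factor $|\det B|^{-1}$ into the definition of $g$ so that $\F g(k)=\F f(Bk)$, whereas the paper takes $g(x)=f(B^{-\top}x)$ and uses $\F g(k)=|\det(B)|\,\F f(Bk)$; the bookkeeping is equivalent.
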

\medskip

%

\begin{proof}
If we consider $Q_{B,v}(f)$, see \eqref{eq:alg_det}, as a function of 
$v\in[0,1]^d$ we easily obtain from 
Lemma~\ref{lem:periodization} that 
\[
Q_{B,v}(f) \,=\, \sum_{k\in\Z^d} \F f(Bk)\, e^{2\pi i \l k, v\r}
\]
for almost every $v\in[0,1]^d$.
Just apply Lemma~\ref{lem:periodization} to $g(x)=f(B^{-\top}x)$ and use 
that $\F g(k) = |\det(B)|\, \F f(Bk)$, 
which is possible since $g\in L_2^\circ(\O')$ with $\O'=B^\top(\O)$ if 
$f\in L_2^\circ(\O)$.
This also shows that
$Q_{B,v}(f)$ is a function (in $v$) that belongs to $L_2([0,1]^d)$.
Since $I(f)=\F f(0)$ and the desired expectation is nothing but the squared 
$L_2([0,1]^d)$-norm of this Fourier series, 
the results follows from Parseval's identity.\\
\end{proof}

\medskip


\subsection{Random dilation} \label{subsec:dilation}

We now show how the random dilation of the point set, see~\eqref{eq:points}, 
leads to our main error bound, i.e.~a bound on the root mean square error 
of $M_n(f)$ in terms of a certain $L_2$-norm of the Fourier transform of $f$. 
This proves Theorem~\ref{thm:error-L2}.
The proof is quite similar to the one in~\cite{KN16}.

\medskip
\noindent{\bf Theorem 1'.}
Let $M_n$, $n>0$, be given by \eqref{eq:alg} and $f\in L_2^\circ(\O)$. 
Moreover, we define the set 
$D_n=\{\xi\in\R^d \colon \prod_{j=1}^d|2 \xi_j|\ge n/d_B\}$. 
Then,
\[
\Delta(M_n,f) \,\le\, c_d\, n^{-1/2}\, \|\F f\|_{L_2(D_n)}
\]
with $c_d=3^{d/2}\sqrt{d_B}$.


\begin{proof}
From Lemma~\ref{lem:bound-L2} we know that
\[
\Delta(M_n,f)^2 \,=\, \E_u \E_v |I(f)-Q_{UB_n,v}(f)|^2 
\,=\, \E_u \sum_{k\in\Z^d\setminus\{0\}} |\F f(UB_nk)|^2.
\]
Using the monotone convergence theorem and $U=\diag(u)$ with 
$u\sim\U([1/2,3/2]^d)$ we obtain
\[
\Delta(M_n,f)^2 \,=\, \sum_{k\in\Z^d\setminus\{0\}} \int_{[1/2, 3/2]^d}|\F f(UB_nk)|^2\dint u.
\]
Now, for fixed $k$, we use the substitution 
$\xi=UB_nk=(u_1(B_nk)_1,\dots,u_d(B_nk)_d)$ and define the axis-parallel boxes 
$R_k:=\prod_{j=1}^d\left[\frac12(B_nk)_j, \frac32(B_nk)_j\right]$
to obtain 
\[\begin{split}
\Delta(M_n,f)^2 \,&=\, \sum_{k\in\Z^d\setminus\{0\}} 
	\int_{R_k}\frac{|\F f(\xi)|^2}{\prod_{j=1}^d|(B_nk)_j|}\dint \xi\\
\,&=\, \sum_{k\in\Z^d\setminus\{0\}} \int_{\R^d} \ind_{R_k}(\xi) \;
	\frac{|\F f(\xi)|^2}{\prod_{j=1}^d|(B_nk)_j|}\dint \xi\\
\,&=\, \int_{\R^d} |\F f(\xi)|^2 \sum_{k\in\Z^d\setminus\{0\}} 
	\frac{\ind_{R_k}(\xi)}{\prod_{j=1}^d|(B_nk)_j|}\dint \xi.
\end{split}\]
From Lemma~\ref{lem:boxes} we obtain  
\[\begin{split}
\sum_{k\in\Z^d\setminus\{0\}} \frac{\ind_{R_k}(\xi)}{\prod_{j=1}^d|(B_nk)_j|}
\,&=\, \sum_{k\in\Z^d\setminus\{0\}} \frac{\ind_{[\frac23 \xi,2\xi]}(B_nk)}{\prod_{j=1}^d|(B_nk)_j|}
\,\le\, \frac{(3/2)^d}{\prod_{j=1}^d|\xi_j|}\, \sum_{k\in\Z^d\setminus\{0\}} \ind_{[\frac23 \xi,2\xi]}(B_nk)\\
\,&\le\, \frac{(3/2)^d}{\prod_{j=1}^d|\xi_j|}\, \Bigl\vert B_n\bigl(\Z^d\setminus\{0\}\bigr)\cap [0,2\xi] \Bigr\vert\\ 
\,&\le\, \frac{3^d d_B}{n}\,\ind_{D_n}(\xi).
\end{split}\]
This 
proves the result.\\
\end{proof}

\begin{rem}\label{rem:dB}
The number $d_B$ is the determinant of the matrix $B$ that satisfies \eqref{eq:prop1}. 
Although we presently do not know how to find ``good'' matrices, we still 
want to know if there are matrices that make the involved constants small. 
Unfortunately, this is not the case. The quantity $D^*:=\inf_B d_B$, where the infimum is 
taken over all $B$ that satisfy \eqref{eq:prop1}, is a central object in the 
\emph{geometry of numbers}, see e.g.~\cite{Le69} for a comprehensive treatment of this 
topic.
There, $D^*$ is called the critical determinant of the star-body 
$S_d:=\{x\in\R^d\colon |x_1\cdot\dots\cdot x_d|\le1\}$ 
(denoted by $\Delta(S_d)$) 
and it is proven that 
$D^*\ge d^d/d!$, see~\cite[Section~41.2]{Le69}. 
Hence the upper bounds that are provided by Theorem~\ref{thm:error-L2} are in any case 
exponentially large in $d$. \\
It remains a challenging open problem if, 
for some $\a>1/2$ and $c_d$ is bounded by a polynomial in $d$, 
an error bound of the form $c_d\, n^{-\a}$
is even possible for, say, functions in ${\bf H}^s_p([0,1]^d)$ with large $s$. 
For $\a=1/2$ this is achieved by the classical Monte Carlo method for functions in 
$L_2([0,1]^d)$.
\end{rem}

\section{Error bounds for smooth functions} \label{sec:sobolev}

In this section we prove the error bounds of the randomized Frolov 
cubature rule for several classes of smooth functions. 
Here we still assume that the functions are defined 
on the whole $\R^d$ and have support inside a bounded, measurable set $\O$
with volume~1.

The function classes under consideration are \emph{Sobolev spaces of 
isotropic/anisotropic/mixed smoothness}. 
In the sequel, $\nu\colon\R^d\to\R$ is always a measurable function 
with $|\nu|>0$. 
Let $1<p<\infty$ and define the spaces
\begin{equation}\label{eq:space-nu}
H_p^\nu \,:=\, \left\{f\in L_p(\R^d)\colon\, 
\F^{-1}\left[\nu\cdot\F f\right]\in L_{p}(\R^d)\right\}
\end{equation}
and
\begin{equation}\label{eq:space-nu2}
\mr{H}_p^\nu(\O) \,:=\, \left\{f\in H_p^\nu\colon\, 
\supp(f)\subset\O\right\}
\end{equation}
equipped with the norm 
$\|f\|_{H_p^\nu} = \|\F^{-1}\left[\nu\cdot\F f\right]\|_{L_{p}(\R^d)}$.
For $S\in\R_+^d$, we denote the Sobolev spaces of anisotropic smoothness $S$ by
\begin{equation}\label{eq:norm1}
H_p^S  \qquad  \text{if} \qquad \nu(\xi)=\nu_S(\xi) := 1+\sum_{j=1}^d|2\pi \xi_j|^{S_j}
\end{equation}
and the Sobolev spaces of anisotropic mixed smoothness $S$ by
\begin{equation}\label{eq:norm2}
\quad\;\, {\bf H}_p^S \qquad \text{if}  \qquad  \nu(\xi)=\wt{\nu}_S(\xi) := \prod_{j=1}^d\left(1+\,|2\pi \xi_j|^{S_j}\right).
\end{equation}
In the case that $S_1=\ldots=S_d=s\in\R_+$ we replace $S$ by $s$ in the above 
notation and denote the spaces \emph{Sobolev spaces of isotropic (resp. mixed) 
smoothness $s$}.
It is well-known that for $S\in\N_0^d$ we can equivalently norm the spaces by
\begin{equation}\label{eq:norm3}
\|f\|_{H^S_p} \,=\, \|f\|_{L_p}+\sum_{j=1}^d \|D^{S_j\cdot e_j} f\|_{L_p}
\end{equation}
and
\begin{equation}\label{eq:norm4}
\|f\|_{{\bf H}^S_p} \,=\, \|f\|_{L_p}+ \sum_{\a\colon \alpha_j\in\{0, S_j\}}\|D^{\a} f\|_{L_p},
\end{equation} 
respectively, where $D^\a f$, $\a\in\N_0^d$, denotes the usual (weak) 
partial derivative of a function~$f$ and 
$e_j$ is the $j$th unit vector in $\R^d$. 

\begin{rem}\label{rem:def}
We use the norms and the corresponding spaces from \eqref{eq:norm3} and 
\eqref{eq:norm4} also for $p=1$ and $p=\infty$. 
Note that the definitions from \eqref{eq:space-nu}--\eqref{eq:norm2} make 
also sense for $p=1$, however in this case they are usually not called 
Sobolev spaces. 
Moreover, note that for $S\in\N^d$ the spaces above are the classical 
Sobolev spaces of (mixed) smoothness $S$, while for $S\notin\N^d$ these 
spaces are sometimes called \emph{Bessel potential spaces}. 
These spaces appear as complex interpolation spaces between Sobolev 
spaces of integer smoothness and are in the scale of 
Triebel-Lizorkin spaces.
For more details on these spaces as well as a historical treatment and 
further results see e.g.~\cite{DTU16, Te93, Tr83}.
But note that the spaces appear in these references also with other denotations, 
like $W^s_p$, ${\bf W}^s_p$ (see~\cite{DTU16}) 
or $S_p^s W$ (see~\cite{Te93}).
\end{rem}

\begin{rem}\label{rem:sob}
There are several different natural definitions of the norms for Sobolev spaces 
of the above type. In particular, one could replace the 
$\ell_{1}$-norms in \eqref{eq:norm1}--\eqref{eq:norm4} 
by any other $\ell_q$-norm, $1\le q\le\infty$, 
since all these norms are equivalent as long as $d$ is finite. 
This would only result in additional constants. 
There are also different conventions for the set of derivatives. 
For example, some people choose 
$\|f\|_{{\bf H}^S_p} = \sum_{\a\colon\a\le S}\|D^\a f\|_{L_p}$ 
instead of \eqref{eq:norm4}.
However, the corresponding spaces are equal.\\
\end{rem}

Before we proceed with the results for the 
Sobolev spaces as defined above, we state a result which will be the common 
starting point for the error bounds in the specific cases.
The following is a direct consequence of Theorem~\ref{thm:error-L2}'. 

\begin{prop} \label{prop:error-general}
Let $M_n$, $n>0$, be given by \eqref{eq:alg} and $f\in \mr{H}_2^\nu(\O)$. 
Moreover, we define the set 
$D_n=\{\xi\in\R^d \colon \prod_{j=1}^d|2 \xi_j|\ge n/d_B\}$, cf.~\eqref{eq:prop}. 
Then,
\[
\Delta(M_n,f) \,\le\, c_d\, n^{-1/2}\, 
	\|\nu^{-1}\|_{L_\infty(D_n)}\, \|f\|_{H^\nu_2}
\]
with $c_d=3^{d/2}\sqrt{d_B}$.
\end{prop}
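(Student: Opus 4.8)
The plan is to treat this as a direct corollary of Theorem~1', whose conclusion already bounds $\Delta(M_n,f)$ by $c_d\,n^{-1/2}\,\|\F f\|_{L_2(D_n)}$ with exactly the constant $c_d=3^{d/2}\sqrt{d_B}$ claimed here and with the same set $D_n$. Thus the entire task reduces to estimating $\|\F f\|_{L_2(D_n)}$ from above by $\|\nu^{-1}\|_{L_\infty(D_n)}\,\|f\|_{H_2^\nu}$. First I would check that Theorem~1' is applicable: since $\mr{H}_2^\nu(\O)\subset H_2^\nu\subset L_2(\R^d)$ and $\supp(f)\subset\O$ by the definitions \eqref{eq:space-nu}--\eqref{eq:space-nu2}, we have $f\in L_2^\circ(\O)$, so the hypotheses are met.

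For the remaining estimate I would write the integrand as a product by inserting $\nu$ and its reciprocal,
\[
|\F f(\xi)|^2 \,=\, \nu(\xi)^{-2}\,|\nu(\xi)\,\F f(\xi)|^2
\,\le\, \|\nu^{-1}\|_{L_\infty(D_n)}^2\,|\nu(\xi)\,\F f(\xi)|^2
\]
for almost every $\xi\in D_n$, where the inequality merely pulls out the essential supremum of $\nu^{-1}$ over $D_n$. Integrating over $D_n$ and then enlarging the domain of integration to all of $\R^d$ (legitimate since the integrand is nonnegative) gives
\[
\|\F f\|_{L_2(D_n)}^2 \,\le\, \|\nu^{-1}\|_{L_\infty(D_n)}^2\,\int_{\R^d}|\nu(\xi)\,\F f(\xi)|^2\dint\xi.
\]
The last integral equals $\|\nu\cdot\F f\|_{L_2(\R^d)}^2$, and by Plancherel's theorem, using that $\F$ and $\F^{-1}$ are $L_2$-isometries, this is $\|\F^{-1}[\nu\cdot\F f]\|_{L_2(\R^d)}^2=\|f\|_{H_2^\nu}^2$. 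Taking square roots and combining with Theorem~1' yields the claim.

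I expect no genuine obstacle here: the argument is a one-line pointwise estimate followed by an application of Plancherel. The only points that require care are (i) confirming $f\in L_2^\circ(\O)$ so that Theorem~1' may be invoked, and (ii) the identity $\|\nu\cdot\F f\|_{L_2}=\|f\|_{H_2^\nu}$, which is exactly the definition of the $H_2^\nu$-norm in \eqref{eq:space-nu} combined with the isometry property of $\F^{-1}$. Both are immediate from the definitions, so the proposition is essentially a reformulation of Theorem~1' adapted to the weighted norm.
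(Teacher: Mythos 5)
Your proof is correct and follows essentially the same route as the paper: reduce to Theorem~1', then bound $\|\F f\|_{L_2(D_n)}$ by pulling out $\|\nu^{-1}\|_{L_\infty(D_n)}$ (your pointwise estimate is exactly the H\"older step the paper uses) and identify $\|\nu\cdot\F f\|_{L_2(\R^d)}$ with $\|f\|_{H_2^\nu}$ via Plancherel. The only cosmetic difference is that you write the final identity with $\F^{-1}$, matching the definition in \eqref{eq:space-nu} directly, while the paper writes $\F$; these give the same $L_2$-norm.
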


\begin{proof}
In view of Theorem~\ref{thm:error-L2}' it is enough to prove the corresponding 
bound on the norm of $\F f$. We obtain from H\"older's inequality that
\[
\|\F f\|_{L_2(D_n)} \,=\, \|\nu^{-1}\cdot\nu\cdot\F f\|_{L_2(D_n)} 
\,\le\, \|\nu^{-1}\|_{L_\infty(D_n)} \cdot \|\nu\cdot\F f\|_{L_{2}(\R^d)}.
\]
Additionally, 
we obtain $\|\nu\cdot\F f\|_{L_{2}(\R^d)}=\|\F[\nu\cdot\F f]\|_{L_{2}(\R^d)}$ 
from the Plancharel theorem, since $\nu\cdot\F f\in L_2(\R^d)$ by assumption. 
This proves the result.\\
\end{proof}

\goodbreak


We see that for the proof of the error bounds for Sobolev spaces 
with $p=2$ it just 
remains to bound some $L_\infty$-norm of the function $1/\nu$. 
%
The proofs of these bounds are quite standard. 
However, we present them for convenience.

\begin{lem}\label{lem:norm1}
Let $\nu_S$, $S\in\R_+^d$, from \eqref{eq:norm1} and 
$D_n=\{\xi\in\R^d \colon \prod_{j=1}^d|2 \xi_j|\ge n/d_B\}$.
Additionally, define $g(S)=(\sum_{j=1}^d 1/S_j)^{-1}$ for $S>0$ 
and $g(S)=0$ otherwise. 
Then, we have
\[
\|\nu_S^{-1}\|_{L_\infty(D_n)} \,\lesssim\, n^{-g(S)}.
\]
The hidden constant only depends on $d$, $S$ and $B$.
\end{lem}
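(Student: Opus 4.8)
The plan is to observe that $\|\nu_S^{-1}\|_{L_\infty(D_n)} = \bigl(\inf_{\xi\in D_n}\nu_S(\xi)\bigr)^{-1}$, so the claim is equivalent to the lower bound $\inf_{\xi\in D_n}\nu_S(\xi)\gtrsim n^{g(S)}$. The case $g(S)=0$, i.e.\ when not all $S_j$ are strictly positive, is immediate: by \eqref{eq:norm1} we have $\nu_S(\xi)\ge 1$ everywhere, hence $\|\nu_S^{-1}\|_{L_\infty(D_n)}\le 1 = n^{0}$. So I would assume $S>0$ from here on.

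Next I would pass to the variables $y_j:=|2\pi\xi_j|$. On $D_n$ the defining inequality $\prod_{j=1}^d|2\xi_j|\ge n/d_B>0$ forces every coordinate to be nonzero, so all $y_j>0$, and it rewrites as $\prod_{j=1}^d y_j\ge \pi^d n/d_B=:M$. Discarding the harmless additive $1$ in $\nu_S$, it then suffices to prove that $\sum_{j=1}^d y_j^{S_j}\gtrsim M^{g(S)}$ for every positive $y$ satisfying $\prod_{j=1}^d y_j\ge M$.

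The key step is a weighted AM--GM inequality with weights chosen to homogenize the product. I set $w_j:=g(S)/S_j$; since $g(S)=(\sum_j 1/S_j)^{-1}$ these are positive and satisfy $\sum_{j=1}^d w_j=1$. Applying weighted AM--GM to the positive numbers $y_j^{S_j}$ gives
\[
\sum_{j=1}^d w_j\,y_j^{S_j}\;\ge\;\prod_{j=1}^d\bigl(y_j^{S_j}\bigr)^{w_j}
\;=\;\prod_{j=1}^d y_j^{\,g(S)}\;=\;\Bigl(\prod_{j=1}^d y_j\Bigr)^{g(S)}\;\ge\;M^{g(S)},
\]
where the exponents collapse to the single constant $g(S)$ precisely because $S_j w_j=g(S)$ for every $j$, and the last inequality uses $g(S)>0$ together with $\prod_j y_j\ge M$. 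Since $w_j\le\max_j w_j=g(S)/\min_j S_j$, I can pass from the weighted to the unweighted sum via $\sum_j y_j^{S_j}\ge(\min_j S_j/g(S))\sum_j w_j y_j^{S_j}$, which yields $\sum_j y_j^{S_j}\ge(\min_j S_j/g(S))\,M^{g(S)}$. Recalling $M=\pi^d n/d_B$ then gives $\nu_S(\xi)\ge\sum_j y_j^{S_j}\gtrsim n^{g(S)}$ with a constant depending only on $d$, $S$ and $B$, as required.

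The only genuinely delicate point is the choice of weights $w_j=g(S)/S_j$: this is exactly what makes the product exponents in AM--GM equal across all $j$ and hence produces the sharp power $g(S)=(\sum_j 1/S_j)^{-1}$. Everything else is bookkeeping; in particular the passage from the weighted to the unweighted sum costs only the constant $\min_j S_j/g(S)$, and the additive $1$ in $\nu_S$ together with the factor $\pi^d/d_B$ in $M$ are absorbed into the implicit constant.
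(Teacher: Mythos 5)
Your proposal is correct and follows essentially the same argument as the paper: handle the degenerate case via $\nu_S\ge 1$, then for $S>0$ apply the weighted AM--GM inequality with exactly the weights $\omega_j=g(S)/S_j$ (which sum to $1$) to get $\nu_S(\xi)\gtrsim\bigl(\prod_j|2\pi\xi_j|\bigr)^{g(S)}\gtrsim n^{g(S)}$ on $D_n$. The only cosmetic difference is your passage from the weighted to the unweighted sum via $1/\max_j\omega_j$, where the paper simply uses $\omega_j\le 1$.
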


\begin{proof}
We clearly have $\nu_S(\xi)\ge1$. This already proves the result 
if $S_j=0$ for some $j$.
Now assume $S>0$ and define $\omega_j:=g(S)/S_j$, such that $\sum_{j=1}^d\omega_j=1$. 
From the weighted arithmetic-geometric mean inequality, we obtain
\[
\nu_S(\xi) \,\ge\,\sum_{j=1}^d \omega_j |2\pi \xi_j|^{S_j}
\,\ge\, \prod_{j=1}^d |2\pi \xi_j|^{\omega_j S_j}
\,=\, \left(\prod_{j=1}^d |2\pi \xi_j|\right)^{g(S)}.
\]
This implies $\|\nu^{-1}\|_{L_\infty(D_n)}\lesssim n^{-g(S)}$
and proves the statement. \\
\end{proof}

\begin{lem}\label{lem:norm2}
Let $\wt\nu_S$, $S\in\R_+^d$, from \eqref{eq:norm2} and 
$D_n=\{\xi\in\R^d \colon \prod_{j=1}^d|2 \xi_j|\ge n/d_B\}$.
Then, we have
\[
\|{\wt\nu_S}^{-1}\|_{L_\infty(D_n)} \,\lesssim\, n^{-s_{\min}},
\]
where $s_{\min}=\min_j S_j$. 
The hidden constant only depends on $d$, $S$ and $B$.
\end{lem}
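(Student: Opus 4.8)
The plan is to reduce the claim to a pointwise lower bound on the weight. Since $\|{\wt\nu_S}^{-1}\|_{L_\infty(D_n)} = \bigl(\inf_{\xi\in D_n}\wt\nu_S(\xi)\bigr)^{-1}$, it suffices to show that $\wt\nu_S(\xi)\gtrsim n^{s_{\min}}$ for every $\xi\in D_n$, with a constant depending only on $d$, $S$ and $B$. The case $s_{\min}=0$ is immediate, because $\wt\nu_S\ge1$ always, so I would assume $s_{\min}>0$.

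First I would estimate each factor of the product $\wt\nu_S(\xi)=\prod_{j=1}^d\bigl(1+|2\pi\xi_j|^{S_j}\bigr)$ separately. Writing $t=|2\pi\xi_j|$ and using $S_j\ge s_{\min}$, I claim that $1+t^{S_j}\ge\max\{1,t\}^{s_{\min}}$: if $t\ge1$ then $1+t^{S_j}\ge t^{S_j}\ge t^{s_{\min}}=\max\{1,t\}^{s_{\min}}$, while if $t<1$ then $\max\{1,t\}^{s_{\min}}=1\le1+t^{S_j}$. Taking the product over $j$ and pulling the common exponent $s_{\min}$ outside gives
\[
\wt\nu_S(\xi)\,\ge\,\Bigl(\prod_{j=1}^d\max\{1,|2\pi\xi_j|\}\Bigr)^{s_{\min}}\,\ge\,\Bigl(\prod_{j=1}^d|2\pi\xi_j|\Bigr)^{s_{\min}}.
\]

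Finally I would insert the defining property of $D_n$. Since $\prod_{j=1}^d|2\xi_j|\ge n/d_B$ on $D_n$ and $|2\pi\xi_j|=\pi|2\xi_j|$, the inner product is at least $\pi^d n/d_B$, so $\wt\nu_S(\xi)\ge(\pi^d n/d_B)^{s_{\min}}\gtrsim n^{s_{\min}}$, which is exactly the desired bound. The only point that needs care — and the one I expect to be the main pitfall — is the appearance of the maximum with $1$ in the second step: one cannot simply use the cheaper bound $1+t^{S_j}\ge t^{S_j}$, since for coordinates with $|2\pi\xi_j|<1$ this yields $t^{S_j}\le t^{s_{\min}}$ and the product would no longer be controlled by $(\prod_j|2\xi_j|)^{s_{\min}}$. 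Clipping each coordinate to $\max\{1,\cdot\}$ is precisely what lets the large product $\prod_j|2\xi_j|\ge n/d_B$ do the work even when individual coordinates are small.
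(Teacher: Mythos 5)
Your proof is correct and follows essentially the same route as the paper: both bound each factor $1+|2\pi\xi_j|^{S_j}$ from below by $\max\{1,|2\pi\xi_j|\}$ raised to a power at least $s_{\min}$, obtain $\wt\nu_S(\xi)\ge\bigl(\prod_{j=1}^d|2\pi\xi_j|\bigr)^{s_{\min}}$, and conclude from the defining inequality of $D_n$. Your version merely spells out the case distinction $t\ge1$ versus $t<1$ and the trivial case $s_{\min}=0$, which the paper leaves implicit.
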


\begin{proof}
We have 
\[
\wt\nu_S(\xi) \,\ge\, \prod_{j=1}^d\max\{1,|2\pi\xi_j|\}^{S_j}
\,\ge\, \left(\prod_{j=1}^d|2\pi\xi_j|\right)^{S_{\min}}.
\]
This proves the statement. \\
\end{proof}

\medskip

For $p>2$ we just use the embedding 
$\mr{H}^\nu_p(\O)\hookrightarrow \mr{H}^\nu_2(\O)$, 
see~\eqref{eq:space-nu2}, which follows from the compact support of the 
contained functions, see e.g.~\cite[Thm.~3.3.1(iii)]{Tr83}. 
That is, we use for $p>2$ the inequalities
\[
\|f\|_{H^S_2} \,\lesssim\, \|f\|_{H^S_p} \qquad \text{ for }\; f\in \mr{H}^S_p(\O)
\]
and
\[
\|f\|_{{\bf H}^S_2} \,\lesssim\, \|f\|_{{\bf H}^S_p} \qquad \text{ for }\; f\in \mr{\bf H}^S_p(\O).
\]

The case $1\le p<2$ is a bit more involved. 
In the isotropic case we use the embedding
\[
\mr{H}^S_p(\O)\,\hookrightarrow\, \mr{H}^{S'}_2(\O)
\]
where $S'=\kappa\cdot S$ (component-wise) with 
$\kappa=1-g(S)^{-1}(1/p-1/2)$ if $g(S)\ge1/p-1/2$, 
see~\cite[Theorem~7]{JS07} and \cite{Tr83}. 
Using Proposition~\ref{prop:error-general} and Lemma~\ref{lem:norm1} 
we obtain
\[
\Delta(M_n,f) \,\lesssim\, n^{-g(S')-1/2}\, \|f\|_{H^{S'}_2} 
\,\lesssim\, n^{-g(S')-1/2}\, \|f\|_{H^{S}_p}
\]
for $f\in \mr{H}^S_p(\O)$, if $g(S)\ge1/p-1/2$. Finally, note that $g(S')=\kappa g(S)=g(S)-1/p+1/2$.
For spaces of mixed smoothness we use the chain of embeddings
\[
{\bf H}^S_p \,\hookrightarrow\, {\bf H}^{s_{\min}}_p 
\,\hookrightarrow\, {\bf H}^{s_{\min}-1/p+1/2}_2
\]
for $1\le p <2$ and 
$s_{\min}=\min_j S_j$ with 
$s_{\min}\ge 1/p-1/2$, see e.g.~\cite[Chapter~2]{ST87}. 
We obtain with Proposition~\ref{prop:error-general} and Lemma~\ref{lem:norm2} 
that
\[
\Delta(M_n,f) \,\lesssim\, n^{-s_{\min}-1+1/p}\, \|f\|_{{\bf H}^{s_{\min}-1/p+1/2}_2} 
\,\lesssim\, n^{-s_{\min}-1+1/p}\, \|f\|_{{\bf H}^{S}_p}
\]
for $f\in \mr{\bf H}^S_p(\O)$.

We now summarize the results of this section.

\begin{thm} \label{thm:main1}
Let $M_n$, $n>0$, be given by \eqref{eq:alg}, $S\in\R_+^d$ and 
$1\le p\le \infty$ ($p\neq1,\infty$ if $S\notin\N^d$). 
Then, for $f\in \mr{H}^S_p(\O)$, 
\[
\Delta(M_n,f) \,\lesssim\, n^{-g(S)-\min\{1/2, 1-1/p\}}\; \|f\|_{H^S_p}, 
\]
if $g(S)\ge\max\{0, 1/p-1/2\}$, where $g(S)=(\sum_{j=1}^d 1/S_j)^{-1}$.
The hidden constant only depends on $p$, $d$, $S$ and $B$.
Moreover, $N(M_n)=n$.
\end{thm}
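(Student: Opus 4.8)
The plan is to prove the bound by a case distinction on $p$, in each case reducing to the $p=2$ situation, where Proposition~\ref{prop:error-general} and Lemma~\ref{lem:norm1} already do all the work. The equality $N(M_n)=n$ requires nothing new: it has been recorded in~\eqref{eq:number}.

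First I would treat $p=2$. Here I apply Proposition~\ref{prop:error-general} with $\nu=\nu_S$ and insert the estimate $\|\nu_S^{-1}\|_{L_\infty(D_n)}\lesssim n^{-g(S)}$ from Lemma~\ref{lem:norm1}. This immediately yields $\Delta(M_n,f)\lesssim n^{-g(S)-1/2}\|f\|_{H^S_2}$, which is the claimed bound, since $\min\{1/2,1-1/p\}=1/2$ at $p=2$ and the side condition $g(S)\ge\max\{0,1/p-1/2\}$ reads simply $g(S)\ge0$ there. Next, for $p>2$ I would invoke the embedding $\mr{H}^S_p(\O)\hookrightarrow\mr{H}^S_2(\O)$, valid by the compact support of the functions, so that $\|f\|_{H^S_2}\lesssim\|f\|_{H^S_p}$, and then quote the previous case. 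As $1-1/p>1/2$ for $p>2$, the exponent stays $-g(S)-1/2=-g(S)-\min\{1/2,1-1/p\}$ and the constraint again reduces to $g(S)\ge0$.

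The only regime with genuine content is $1\le p<2$. Here I would use the smoothness-losing embedding $\mr{H}^S_p(\O)\hookrightarrow\mr{H}^{S'}_2(\O)$ with $S'=\kappa S$ and $\kappa=1-g(S)^{-1}(1/p-1/2)$, which is available precisely when $g(S)\ge1/p-1/2=\max\{0,1/p-1/2\}$, matching the hypothesis. Applying the $p=2$ bound to the target space gives $\Delta(M_n,f)\lesssim n^{-g(S')-1/2}\|f\|_{H^{S'}_2}\lesssim n^{-g(S')-1/2}\|f\|_{H^S_p}$, and the step to verify is the bookkeeping $g(S')=g(\kappa S)=\kappa g(S)=g(S)-(1/p-1/2)$, using that $g$ is positively homogeneous of degree one. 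Then the exponent assembles as $-g(S')-1/2=-g(S)-(1-1/p)=-g(S)-\min\{1/2,1-1/p\}$, as claimed.

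The main obstacle, such as it is, lies entirely in this last case: confirming that the multiplicative scaling $\kappa$ of the smoothness vector produces exactly the multiplicative scaling $g(S')=\kappa\,g(S)$ of the order quantity, and that the three regimes then collapse into the single uniform form with $\min\{1/2,1-1/p\}$. Everything else is an immediate concatenation of the already established Proposition~\ref{prop:error-general}, Lemma~\ref{lem:norm1}, and the quoted embeddings.
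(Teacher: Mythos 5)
Your proposal is correct and follows essentially the same route as the paper: the case $p=2$ via Proposition~\ref{prop:error-general} and Lemma~\ref{lem:norm1}, the case $p>2$ via the compact-support embedding $\mr{H}^S_p(\O)\hookrightarrow\mr{H}^S_2(\O)$, and the case $1\le p<2$ via the smoothness-losing embedding $\mr{H}^S_p(\O)\hookrightarrow\mr{H}^{S'}_2(\O)$ with $S'=\kappa S$, together with the homogeneity computation $g(\kappa S)=\kappa g(S)=g(S)-(1/p-1/2)$. The exponent bookkeeping and the matching of the side condition $g(S)\ge\max\{0,1/p-1/2\}$ are exactly as in the paper's argument.
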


\medskip

\begin{thm} \label{thm:main2}
Let $M_n$, $n>0$, be given by \eqref{eq:alg}, $S\in\R_+^d$ and 
$1\le p\le \infty$ ($p\neq1,\infty$ if $S\notin\N^d$). 
Then, for $f\in \mr{\bf H}^S_p(\O)$, 
\[
\Delta(M_n,f) \,\lesssim\, n^{-s_{\min}-\min\{1/2, 1-1/p\}}\, \|f\|_{{\bf H}^S_p}, 
\]
if $s_{\min}\ge\max\{0, 1/p-1/2\}$, where $s_{\min}=\min_j S_j$.
The hidden constant only depends on $p$, $d$, $S$ and $B$. 
Moreover, $N(M_n)=n$.
\end{thm}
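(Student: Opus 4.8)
The plan is to reduce the whole statement to the case $p=2$, where Proposition~\ref{prop:error-general} combined with Lemma~\ref{lem:norm2} already delivers the bound, and then to transport the estimate to $p\neq2$ using the embeddings assembled in the text preceding the statement. First I would treat $p=2$ directly: taking $\nu=\wt\nu_S$ in Proposition~\ref{prop:error-general} gives
\[
\Delta(M_n,f) \,\le\, c_d\, n^{-1/2}\, \|{\wt\nu_S}^{-1}\|_{L_\infty(D_n)}\, \|f\|_{{\bf H}^S_2},
\]
and Lemma~\ref{lem:norm2} bounds $\|{\wt\nu_S}^{-1}\|_{L_\infty(D_n)}\lesssim n^{-s_{\min}}$, so that $\Delta(M_n,f)\lesssim n^{-s_{\min}-1/2}\|f\|_{{\bf H}^S_2}$. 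This matches the asserted order $n^{-s_{\min}-\min\{1/2,\,1-1/p\}}$ since $\min\{1/2,\,1-1/p\}=1/2$ at $p=2$.

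Next I would dispose of $p>2$. Because every $f\in\mr{\bf H}^S_p(\O)$ has compact support in $\O$, the embedding $\mr{\bf H}^S_p(\O)\hookrightarrow\mr{\bf H}^S_2(\O)$ is available and yields $\|f\|_{{\bf H}^S_2}\lesssim\|f\|_{{\bf H}^S_p}$. Inserting this into the $p=2$ estimate produces $\Delta(M_n,f)\lesssim n^{-s_{\min}-1/2}\|f\|_{{\bf H}^S_p}$, and since $\min\{1/2,\,1-1/p\}=1/2$ throughout $p\ge2$ this is precisely the claimed bound; the endpoint $p=\infty$ with integer $S$ is covered here as well, the norm being understood via \eqref{eq:norm4} as in Remark~\ref{rem:def}.

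The range $1\le p<2$ is where I expect the only genuine bookkeeping obstacle, since passing to an $L_2$-based space now costs smoothness. Here I would use the chain ${\bf H}^S_p\hookrightarrow{\bf H}^{s_{\min}}_p\hookrightarrow{\bf H}^{s_{\min}-1/p+1/2}_2$, valid exactly when $s_{\min}\ge1/p-1/2$, to reduce to the $p=2$ estimate at the reduced smoothness $s_{\min}-1/p+1/2$. Applying Proposition~\ref{prop:error-general} and Lemma~\ref{lem:norm2} at this smoothness gives the exponent $-(s_{\min}-1/p+1/2)-1/2=-s_{\min}-1+1/p$, which coincides with $-s_{\min}-\min\{1/2,\,1-1/p\}$ because $\min\{1/2,\,1-1/p\}=1-1/p$ for $p<2$. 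The side condition $s_{\min}\ge\max\{0,\,1/p-1/2\}$ in the statement is exactly what makes this work: the $\max\{0,\cdot\}$ part keeps $D_n$-integrability (nonnegative smoothness) in force, while the $1/p-1/2$ part is precisely the hypothesis under which the $p<2$ embedding holds; the restriction $p\neq1,\infty$ for $S\notin\N^d$ ensures the Bessel-potential spaces are well defined when invoking these embeddings. Finally, the assertion $N(M_n)=n$ is immediate from the computation in \eqref{eq:number}.
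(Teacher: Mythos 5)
Your proposal is correct and follows essentially the same route as the paper: the $p=2$ case via Proposition~\ref{prop:error-general} and Lemma~\ref{lem:norm2}, the $p>2$ case via the compact-support embedding $\mr{\bf H}^S_p(\O)\hookrightarrow\mr{\bf H}^S_2(\O)$, and the $1\le p<2$ case via the chain ${\bf H}^S_p \hookrightarrow {\bf H}^{s_{\min}}_p \hookrightarrow {\bf H}^{s_{\min}-1/p+1/2}_2$, with the same exponent bookkeeping and the same justification of $N(M_n)=n$ from \eqref{eq:number}.
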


\medskip
\section{Integration of functions on the cube} \label{sec:non-periodic}

Until now we always considered functions that are supported inside 
a bounded set $\Omega$ of volume one. This was for two reasons. 
First of all, this was necessary to ensure that the algorithm $M_n$ from 
\eqref{eq:alg} uses in expectation exactly $n$ function evaluations. 
Additionally, it was necessary for the results in 
Theorems~\ref{thm:main1}~\&~\ref{thm:main2} for $p>2$, since the used 
embeddings only work for functions defined on bounded sets.

In this section we comment on the integration of functions that are 
defined on the unit cube $\Omega=[0,1]^d$ and do not satisfy any boundary 
condition.
These spaces are defined as restriction of the spaces $H^\nu_p$, 
see~\eqref{eq:space-nu}, to $[0,1]^d$. That is we define
\begin{equation}\label{eq:space-nu-cube}
H_p^\nu([0,1]^d) \,:=\, \left\{f\in L_p([0,1]^d)\colon\, 
\exists g\in H^\nu_p\; \text{ such that }\; g|_{[0,1]^d}=f\right\}
\end{equation}
with the (quasi-)norm 
\[
\|f\|_{H_p^\nu([0,1]^d)} \,:=\, \inf_{g}\, \|g\|_{H_p^\nu},
\]
where the infimum is taken over all functions $g\in H^\nu_p$ that 
agree with $f$ on $[0,1]^d$.
Again we consider the choices of $\nu$ and the notation from 
\eqref{eq:norm1} and \eqref{eq:norm2}
and denote the corresponding spaces by $H_p^S([0,1]^d)$ and 
${\bf H}_p^S([0,1]^d)$, respectively. 

The algorithm that is used for these spaces is based on the algorithm 
$M_n$ from \eqref{eq:alg} together with a mapping 
$T$ that maps boundedly from $H_p^\nu([0,1]^d)$ to $\mr{H}_p^\nu$. 
Such mappings and their application to numerical integration appeared 
several times in the literature, see e.g.~\cite{By85,Du93,Du97,NUU16,Te93,Te03}. 
Here, we follow~\cite{Te03} and use componentwise change of variable with 
a suitable $C^\infty(\R)$-function~$\psi$, i.e.
\begin{equation}\label{eq:psi}
\psi(t) := \left\{\begin{array}{rcl}
          \int_0^t e^{-\frac{1}{\xi(1-\xi)}}\,d\xi / \int_0^1 e^{-\frac{1}{\xi(1-\xi)}} \,d\xi &:& t\in [0,1],\\
          1&:& t>1,\\
          0&:& t<0\,.
\end{array}\right.
\end{equation}
We define
\[
Tf(x) \,:=\, \left|\prod_{j=1}^d\psi'(x_j)\right| \, f\bigl(\psi(x_1),\ldots,\psi(x_d)\bigr), 
\qquad x\in\R^d.
\]
Clearly, $\supp(Tf)\subset[0,1]^d$ and, by 
change of variable, 
$\int_{[0,1]^d}Tf(x)\dint x=\int_{[0,1]^d}f(x)\dint x$.

For functions $f\in H_p^\nu([0,1]^d)$ we consider the randomized algorithm 
\begin{equation}\label{eq:alg2}
\widebar{M}_n(f) \,:=\, M_n(Tf),
\end{equation}
where $M_n$ is given in \eqref{eq:alg}.
From the results of the previous sections, 
see e.g.~Proposition~\ref{prop:error-general}, we know that we can bound the 
mean squared error of $\widebar{M}_n$ by 
\[
\Delta(\widebar{M}_n,f) \,=\, \Delta(M_n,Tf) \,\le\, e_n(\nu,p,d)\cdot \|Tf\|_{H^\nu_p}
\]
for some $e_n(\nu,p,d)$ that is independent of $f$. 
To prove the desired error bounds it remains to show 
$\|Tf\|_{H^\nu_p}\lesssim\|f\|_{H^\nu_p([0,1]^d)}$, i.e.~that 
$T: H^\nu_p([0,1]^d)\to \mr{H}^\nu_p$ is bounded. 
If so, this shows that we have the same (up to a constant) error bound for 
$\widebar{M}_n$ in $H^\nu_p([0,1]^d)$ as we have for $M_n$ in $\mr{H}^\nu_p$.

For the spaces $H^S_p$ and ${\bf H}^S_p$, $S\in\R_+^d$, $1\le p \le\infty$ 
($1<p<\infty$ if $S\notin\N^d$), this boundedness was shown in 
\cite{Te03} and \cite{NUU16}. Actually, the boundedness was only proven 
for the cases $S_1=\ldots=S_d$, but the proofs in the anisotropic case 
follow exactly the same lines.
For a more detailed treatment of such ``change of variable''-mappings 
(especially for the use of piecewise polynomials instead of $\psi$) 
see~\cite{NUU16} and the references therein.

\bigskip

\noindent
{\bf Acknowledgement.}
The author thanks Stefan Heinrich, Aicke Hinrichs, David Krieg, 
Erich Novak and Tino Ullrich for many fruitful discussions on the 
subject of this paper. 
Additionally, I thank Glenn Byrenheid and Tino Ullrich for hints 
that lead to substantial improvements in Section~\ref{sec:sobolev}, 
and Andreas M\"uller for his valuable and inspiring comments.

\end{document}